\documentclass[11pt]{article}

\usepackage[margin=1in]{geometry}
\usepackage{amsmath,amssymb,amsthm}
\usepackage{mathtools}
\usepackage{booktabs}
\usepackage{tikz}
\usepackage[T1]{fontenc}
\usepackage{lmodern}
\usepackage{microtype}
\usepackage{hyperref}
\hypersetup{
  pdftitle={A Rank-Count Theory for the Combinatorial Discretizable Distance Geometry Problem},
  pdfauthor={Michael Souza, Wagner da Rocha, Carlile Lavor},
  colorlinks=true,
  linkcolor=blue,
  citecolor=blue,
  urlcolor=blue
}

\theoremstyle{plain}
\newtheorem{theorem}{Theorem}
\newtheorem{lemma}{Lemma}

\theoremstyle{definition}
\newtheorem{definition}{Definition}
\newtheorem{remark}{Remark}

\title{A Rank-Count Theory for the Combinatorial Discretizable Distance Geometry Problem}
\author{
Michael Souza\\
Universidade Federal do Cear\'a\\
\texttt{michael@ufc.br}
\and
Wagner da Rocha\\
Unicamp\\
\texttt{wdarocha@ime.unicamp.br}
\and
Carlile Lavor\\
Unicamp\\
\texttt{clavor@unicamp.br}
}
\date{}

\begin{document}

\maketitle

\begin{abstract}
The Distance Geometry Problem (DGP) asks for a geometric realization of a weighted graph with \(n\) vertices in \(\mathbb{R}^K\) such that Euclidean distances between vertices match the given edge weights. When a vertex order where every non-seed vertex has \(K\) predecessors inducing a clique is part of the input, the search space can be discretized via \(K\)-lateration, branching into a binary tree. In this subclass, known as the Combinatorial Discretizable Distance Geometry Problem (Combinatorial DDGP), the goal is to determine the number of realizations satisfying all distance constraints. While the number of realizations is almost always \(2^{n-K}\) when no additional distance constraints are present, a topological solution count in the presence of additional constraints has remained elusive. We develop an algebraic rank-count theory for the feasible binary branch codes, proving that under mirror-separated parameters they form an affine space over \(\mathbb{F}_2\) whenever a viable reference solution exists.
\end{abstract}

\section{Introduction}
\label{sec:intro}

The Distance Geometry Problem (DGP) asks for a geometric representation of a weighted graph in $\mathbb{R}^K$: given a simple undirected graph $G = (V, E)$ and an edge weight function $d: E \to \mathbb{R}_+$, find an embedding (or realization) $x: V \to \mathbb{R}^K$ such that $\|x_i - x_j\|_2 = d_{ij}$ for every $\{i,j\}\in E$~\cite{mucherino2012discretizable}. The problem appears in key applications such as protein structure determination from nuclear magnetic resonance (NMR) distance data~\cite{liberti2008branch}, sensor network localization~\cite{yemini1978positioning}, the geometry of nanostructures~\cite{mucherino2020analysis} and data science~\cite{liberti2020distance}.

The main mechanism that allows us to formulate the originally continuous Distance Geometry Problem (DGP) as a discrete one, called the Discretizable Distance Geometry Problem (DDGP), is the introduction of a vertex order on $V = \{1, \ldots, n\}$ that supports $K$-lateration. If each non-seed vertex $j > K$ has a predecessor set $U_j \subset \{1, \ldots, j-1\}$ of size $K$ with known pairwise distances, the position of $j$ is determined by the intersection of $K$ spheres centered at the coordinates of its predecessors~\cite{mucherino2012discretizable,liberti2014number}. Generically, this intersection contains at most two points. By sequentially applying this lateration step, the continuous search space is discretized into a binary search tree where valid embeddings correspond to paths. 

Depending on the properties of this vertex order, different discrete problem variants are defined. When every $U_j$ induces a clique in $G$, we obtain the Combinatorial DDGP \cite{abud2024impossible}; when the predecessors are further restricted to be contiguous, such that $U_j=\{j-K, \ldots, j-1\}$, we obtain the Discretizable Molecular Distance Geometry Problem (DMDGP).

These variants reveal a striking conflict between the tractability of order construction and the tractability of solution counting. On one extreme, while counting solutions is well-understood for the DMDGP~\cite{liberti2013counting}, finding a valid contiguous order is NP-complete~\cite{cassioli2015discretization}. On the other extreme, while general DDGP orders are easy to construct, the lack of predecessor cliques makes lateration weight-dependent, preventing topological solution counting~\cite{abud2024impossible}. The Combinatorial DDGP elegantly resolves this conflict: finding a valid order is fixed-parameter tractable for a fixed dimension $K$~\cite{cassioli2015discretization}, while the clique condition preserves reflection symmetries to keep the search space discrete and weight-independent. However, despite representing this ideal compromise, no method was previously known to determine the number of solutions for Combinatorial DDGP instances.

To address this challenge, this paper develops a complete algebraic rank-count theory for the feasible binary branch codes of ordered Combinatorial DDGP instances. We prove that, under mirror-separated parameters and assuming the existence of at least one viable reference solution, the feasible branch codes form an affine space over the finite field \(\mathbb{F}_2\). This algebraic structure allows us to determine the exact number of realizations using a rank formula, without having to explore the binary search tree.

Our contribution is threefold:
\begin{enumerate}
    \item We decompose the branch decisions into constrained and free bits, and represent constrained branch transformations using graph-derived cone generators and base generators.
    \item We introduce a labeled violation matrix that separates violations caused by different mirror cliques, keeping track of mirror-specific incompatibilities.
    \item We prove the \emph{Rank-Count Theorem}: relative to any reference solution $s^\ast$, the feasible constrained branch codes form an affine coset. Consequently, under these mirror-separation and nonemptiness conditions, the total number of realizations is given by a precise rank formula depending solely on the ordered template and the active-edge pattern.
\end{enumerate}

We organize the remainder of the paper as follows. Section~\ref{sec:related} reviews DMDGP symmetries, Combinatorial DDGP counts, DDGP recognition, and the counting barrier for general DDGP instances. Sections~\ref{sec:subproblems}--\ref{sec:violations} define branch codes, generators, and labeled violations. Section~\ref{sec:results} proves the main theorem and states the mirror-separation hypothesis at the point where it is used. Section~\ref{sec:example} gives a fully worked 7-vertex example.

\section{Related Work}
\label{sec:related}

We review three key areas of prior work that directly contextualize our contribution: partial-reflection symmetries in discretized distance geometry, complexity and counting barriers, and the contrast between algorithmic search and algebraic representations.

\subsection{Discretization and Partial-Reflection Symmetries}
The DGP has a continuous search space in general, but certain vertex orders turn it into a discrete search problem. In the DMDGP~\cite{lavor2012discretizable}, each vertex $j>K$ is adjacent to its $K$ immediate predecessors, so $U_j = \{j-K, \ldots, j-1\}$. These predecessor sets are contiguous, and the corresponding local cliques make it possible to place $j$ by $K$-lateration. We call the edges from each vertex to its prescribed predecessors the discretization edges $E_D$; all remaining distance constraints are pruning edges $E_P=E\setminus E_D$, used to test and discard incompatible branches.

Geometrically, each lateration step offers two candidate coordinates. These candidates are mirror images across the affine hyperplane spanned by the predecessor coordinates. Thus a branch choice in the binary search tree can be interpreted as a partial reflection: it fixes the vertices already placed and reflects the remaining suffix of the realization~\cite{mucherino2012exploiting}.

This viewpoint gives the DMDGP search tree a strong algebraic structure. For each admissible branching vertex $v$, one associates a generator $g_v$ that applies the corresponding partial reflection to the suffix $\{v,\ldots,n\}$ of a realization. Composing these generators produces a group action on the realizations explored by the binary search tree. Pruning edges restrict which suffix reflections remain valid symmetries, yielding a subgroup generated by the reflections that do not cross a pruning span~\cite{mucherino2012exploiting,liberti2013counting}.

This symmetry group explains the generic power-of-two solution counts in the exact-distance setting~\cite{liberti2011number,liberti2014number}. Because DMDGP reflections act on suffix intervals of the vertex order, solution counting and search can be described from graph topology rather than numerical edge weights~\cite{lavor2021optimality}. The same symmetries also support build-up, splitting, and search algorithms for DMDGP instances~\cite{mucherino2012exploiting,goncalves2021new}. 

\subsection{Complexity and Counting Barriers}
Relaxing the contiguity requirement of the vertex order leads to important computational trade-offs. Valid contiguous DMDGP orders are NP-complete to find for any fixed dimension $K \ge 2$~\cite{cassioli2015discretization}. In the general DDGP, the predecessor sets $U_j$ need not be consecutive; this broader order structure has been studied through the order-independent $K$-discretization graph and the order-dependent $K$-incident graph~\cite{abud2018k}. Valid DDGP orders, by contrast, can be found by fixed-parameter tractable algorithms with respect to $K$, and in polynomial time for fixed $K$~\cite{cassioli2015discretization}. However, this relaxation significantly complicates solution counting. 

First, Abud et~al.~\cite{abud2024impossible} proved that no purely combinatorial (weight-independent) counting method exists for the general DDGP: without predecessor cliques, the number of lateration outcomes can depend on the numerical edge weights. The Combinatorial DDGP subclass avoids this barrier by requiring predecessor sets to induce cliques, thereby recovering the positive weight-independent result where the count is almost always $2^{n-K}$ when no additional constraints are present~\cite{abud2024impossible}. Second, when additional constraints are present, counting solutions in the Combinatorial DDGP becomes highly non-trivial. Because predecessors are non-consecutive, branch reflections no longer act on suffix intervals; instead, a branch change at a vertex $q$ propagates dynamically through a directed successor dependency graph. This dynamic propagation is the key obstruction that has prevented topological counting in the presence of additional constraints, which we address in this paper.

\subsection{Algorithmic Search versus Algebraic Representations}
Traditional approaches to handling additional constraints (pruning edges) in discretized distance geometry are predominantly algorithmic. The Branch-and-Prune (BP) algorithm~\cite{liberti2008branch,mucherino2012discretizable} performs a depth-first traversal of the binary search tree, placing each new vertex by lateration and pruning a branch as soon as a distance constraint is violated. Symmetry-based variants reduce this traversal by using partial reflections to update candidate realizations across subproblems induced by pruning edges~\cite{goncalves2021new}. While these algorithms are highly effective and exhibit fixed-parameter tractability under certain branching-frequency conditions, they rely on numerical coordinate calculations and tree traversal. 

In contrast, symbolic and algebraic representations have been developed to study the underlying structure of the search space. Binary branch-code representations are standard for modeling paths in the binary search tree, and related binary formulations have been used to analyze symmetries, degrees of freedom, and coordinate-free distance constraints~\cite{mucherino2012exploiting,goncalves2021new,mucherino2020analysis}. Our work builds on this algebraic perspective. Instead of performing tree-based coordinate searches, we model the feasible branch decisions as an affine space over the finite field $\mathbb{F}_2$. By encoding branch transformations as graph-derived generators and recording their conflicts with additional constraints in a labeled violation matrix, we show that, once a feasible reference solution is available and mirror separation holds, the count of feasible branch codes is determined directly by a rank formula.

\section{Branch Codes and Pruning Constraints}
\label{sec:subproblems}

This section sets up the combinatorial objects used in the counting argument. We first fix the ordered predecessor system that produces the branch decisions, then use its dependency structure to identify which decisions can influence pruning constraints. This separates the branch-code space into constrained and free coordinates, and yields a smaller constrained code whose cardinality determines the total number of realizations.

Let the chosen order be built into the vertex labels, so that $V=\{1,\ldots,n\}$. For each vertex $i$, write $U_i$ for the predecessor set of $i$. Additionally, let $V_0=\{1,\ldots,K\}$ be the fixed initial clique in $\mathbb{R}^K$. For $i\le K$, set $U_i=\varnothing$. For every non-seed vertex $i > K$, the predecessor set $U_i \subseteq \{1,\ldots,i-1\}$ has size $K$ and induces a prescribed clique in $G$. Note that the predecessor relation defines a directed acyclic graph with an arc $j \to i$ whenever $j \in U_i$.

For the rest of the paper, the ordered predecessor family $U=(U_i)_{i\in V}$ is fixed. All predecessor closures are taken with respect to this family, so we suppress the subscript $U$ and write $\operatorname{Fix}$ and $\operatorname{Cone}$.

Two closures of this predecessor relation will be used throughout the construction. The first one looks backward: to determine the position of a vertex, we must also determine the predecessors needed to place it, their predecessors, and so on.

\begin{definition}[Fixing set]
\label{def:fixing}
The fixing set of a vertex $i$ is
\[
\operatorname{Fix}(i)
=
\{i\}\cup \bigcup_{u\in U_i}\operatorname{Fix}(u).
\]
For seed vertices this gives the base case $\operatorname{Fix}(i)=\{i\}$ because $U_i=\varnothing$.
\end{definition}

The forward closure captures propagation: changing a branch decision at a vertex can affect every later vertex that depends on it, directly or indirectly.

\begin{definition}[Dependency cone]
\label{def:cone}
The dependency cone of a vertex $i$ is
\[
\operatorname{Cone}(i)
=
\{i\}\cup \bigcup_{w:\,i\in U_w}\operatorname{Cone}(w),
\]
where $w$ ranges over vertices. If no later vertex has $i$ as a predecessor, this gives the terminal base case $\operatorname{Cone}(i)=\{i\}$.
\end{definition}

Let $E_D = \binom{V_0}{2} \cup \{\{u,i\} : i>K,\, u \in U_i\}$ denote the seed and discretization edges induced by the predecessor relation, and let $E_P = E\setminus E_D$ denote the pruning edges. The full branch-decision set is
\[
B = V\setminus V_0.
\]
Define the pruning-relevant vertex set
\[
L
=
\bigcup_{\{u,v\}\in E_P}
\left(\operatorname{Fix}(u)\cup \operatorname{Fix}(v)\right),
\]
and the pruning-constrained branch set
\[
B_{\mathrm{c}}=L\setminus V_0.
\]
The remaining branch decisions
\[
B_{\mathrm{f}}=B\setminus B_{\mathrm{c}},
\qquad
f=\lvert B_{\mathrm{f}}\rvert,
\]
are free with respect to pruning constraints: changing them preserves all discretization edges by construction and cannot change the endpoints of any pruning edge.

The active edge set is
\[
F = E_D[L] \cup E_P,
\qquad
E_D[L]=\{\{u,v\}\in E_D \mid u,v\in L\}.
\]
When $E_P=\varnothing$, we have $L=\varnothing$, $B_{\mathrm{c}}=\varnothing$, and all branch decisions are free.

Let $x(s)=(x_a(s))_{a\in V}$ denote the embedding determined by the full branch code $s\in\mathbb{F}_2^B$. Throughout, counts are branch-code counts relative to the fixed seed clique $V_0$; realizations are not additionally quotient-identified by global Euclidean congruences. The full solution code is
\[
\Xi
=
\{s \in \mathbb{F}_2^B \mid
\|x_a(s)-x_b(s)\|_2=d_{ab}
\text{ for every } \{a,b\}\in E\}.
\]
The constrained solution code is
\[
\Xi_F
=
\{s_{\mathrm{c}} \in \mathbb{F}_2^{B_{\mathrm{c}}} \mid
\|x_a(s)-x_b(s)\|_2=d_{ab}
\text{ for every } \{a,b\}\in F\},
\]
where $s$ is any full branch code whose restriction to $B_{\mathrm{c}}$ is $s_{\mathrm{c}}$. This is well-defined because branch decisions in $B_{\mathrm{f}}$ do not affect edges in $F$. Hence
\[
\lvert \Xi\rvert = 2^f \lvert \Xi_F\rvert.
\]
Our goal is to compute $\lvert \Xi\rvert$ under generic geometric assumptions without running tree-based branch search.

\section{Graph-Derived Generators}
\label{sec:generators}

This section defines the graph-derived generators that encode branch transformations and their associated mirror cliques. We introduce cone generators to represent the algebraic propagation of branch decisions within their downstream cones. Base generators are defined for each unique predecessor clique in $\mathcal{C}$. Finally, we construct the generator mask matrix to map linear combinations of these generators to their net branch masks.

A generator represents a branch transformation together with the mirror clique that explains its geometric action. Each generator is a pair
\[
g=(m_g, C_g),
\]
where $m_g \in \mathbb{F}_2^{B_{\mathrm{c}}}$ is a branch mask and $C_g$ is a predecessor clique. The mask tells which pruning-constrained branch bits are toggled; the clique tells which affine mirror is used for the reflection.

\begin{definition}[Cone generator]
\label{def:cone-generator}
For each decision vertex $q \in B_{\mathrm{c}}$, the cone generator is
\[
g_q = (m_q, U_q),
\qquad
\operatorname{supp}(m_q) = B_{\mathrm{c}} \cap \operatorname{Cone}(q).
\]
\end{definition}

When the rows and cone-generator columns are ordered by the DDGP vertex order, the cone-mask matrix is triangular with diagonal entries equal to one. Hence the cone masks $\{m_q \mid q\in B_{\mathrm{c}}\}$ form a basis of the constrained branch space $\mathbb{F}_2^{B_{\mathrm{c}}}$.

The predecessor-clique set used for base generators is
\[
\mathcal{C}=\{U_w\mid w\in B_{\mathrm{c}}\}.
\]

\begin{definition}[Base generator]
\label{def:base-generator}
For a predecessor clique $C\in\mathcal{C}$, define the generator roots:
\[
\operatorname{Gen}(C)=\{w \in B_{\mathrm{c}} \mid U_w = C\}.
\]
The base generator associated with $C$ is
\[
g_C=(m_C, C),
\]
where
\[
\operatorname{supp}(m_C)
=
B_{\mathrm{c}} \cap \bigcup_{w:\,U_w=C} \operatorname{Cone}(w)
=
\bigcup_{w\in \operatorname{Gen}(C)} \operatorname{supp}(m_w).
\]
\end{definition}

Base generators are highly valuable because a single algebraic branch mask can correspond to multiple generator presentations. We must retain these duplicate masks as distinct labeled columns of the mask and violation matrices rather than collapse them under a quotient operation. This is because the algebraic violation test for each generator is evaluated strictly relative to its mirror clique; the mask captures the coordinate branch-code shift, while the label records the specific affine mirror used by that algebraic certificate of feasibility.

After defining both kinds of generators, set $\mathcal{G}$ to be the labeled generator family
\[
\mathcal{G}
=
\{g_q\mid q\in B_{\mathrm{c}}\}\sqcup\{g_C\mid C\in\mathcal{C}\},
\qquad
m=\lvert\mathcal{G}\rvert.
\]
This is an indexed family of generator columns, not a quotient by branch mask. Duplicate masks are retained as separate columns when they carry distinct cone or base labels, and in particular when they carry distinct mirror labels. The generator coefficient space is
\[
A = \mathbb{F}_2^{\mathcal{G}}.
\]
For a mirror clique $C$, write
\[
\mathcal{G}_C=\{g\in\mathcal{G}: C_g=C\}.
\]
Choose an enumeration $\mathcal{G}=\{g^1,\ldots,g^m\}$ for the matrix representation below.

\begin{definition}[Mask matrix]
\label{def:mask-matrix}
The generator mask matrix is the linear map
\[
M: A \to \mathbb{F}_2^{B_{\mathrm{c}}},
\]
whose $j$-th column is $m_{g^j}$. For any generator coefficient vector $\alpha=(\alpha_1,\ldots,\alpha_m)\in A$, the action of $M$ is
\[
M\alpha = \bigoplus_{g^j:\,\alpha_j = 1} m_{g^j}.
\]
\end{definition}

\noindent Concretely, $M$ is the $\lvert B_{\mathrm{c}}\rvert\times m$ binary matrix whose columns are the generator masks:
\[
M \;=\;
\begin{bmatrix}
  \Big| & \Big| & & \Big| \\[4pt]
  m_{g^1} & m_{g^2} & \cdots & m_{g^m} \\[4pt]
  \Big| & \Big| & & \Big|
\end{bmatrix},
\]
where each column $m_{g^j}\in\mathbb{F}_2^{B_{\mathrm{c}}}$ is the branch mask of generator $g^j$.
Rows are indexed by the pruning-constrained branches $B_{\mathrm{c}}$, and columns are indexed by the generators $\mathcal{G}=\{g^1,\ldots,g^m\}$.

The vector $M\alpha$ represents the net branch mask produced by the generator combination $\alpha$.

\section{Labeled Violations}
\label{sec:violations}

This section formalizes how partial reflections (generator moves) can violate active distance constraints. 
Geometrically, a reflection only affects the length of an active edge if the edge crosses the boundary of the reflection support, meaning one endpoint is reflected while the other remains stationary. 
However, to build a consistent counting theory over $\mathbb{F}_2$, a violation cannot be modeled as a simple binary event on the edge. 
Instead, it must be indexed by both the edge and the mirror clique of the generator, yielding a \emph{labeled violation}. 
This dual indexing is algebraically indispensable: our algebraic certificate allows cancellations only within the same mirror label, while any potential cross-mirror coincidences are treated as exceptional and mathematically excluded by the mirror-separation hypothesis.

To formalize these geometric mechanisms algebraically, let us first establish how generator supports interact with the active constraints. Fix the active edge set $F$. Let $g=(m_g, C_g)$ be a generator, and write
\[
S_g = \operatorname{supp}(m_g).
\]
An active edge $e = \{a,b\} \in F$ crosses the support boundary of $g$ if exactly one endpoint is toggled:
\[
\lvert e \cap S_g \rvert = 1.
\]
If this happens, one endpoint is moved by the reflection while the other is fixed. For the generic parameter choices considered below, no active vertex outside a predecessor clique $C$ lies accidentally in $\operatorname{aff}(C)$. Under this generic non-incidence condition, the edge length is preserved exactly when the fixed endpoint belongs to $C_g$. Therefore, we say that $g$ violates $e$ with mirror label $C_g$ if
\[
\lvert e \cap S_g \rvert = 1
\quad\text{and}\quad
e\setminus S_g \not\subseteq C_g.
\]
Because $e$ crosses the support boundary, the set difference $e \setminus S_g$ is a singleton containing the unique endpoint of $e$ that remains fixed. Under generic non-incidence, the condition $e \setminus S_g \not\subseteq C_g$ is therefore equivalent to saying that this fixed endpoint does not lie on the reflection mirror hyperplane $\operatorname{aff}(C_g)$.

The labeled violations are indexed by both the edge and the mirror clique. Let
\[
\mathcal{L}
=
\{(e, C_g) \mid g \in \mathcal{G},\ e \in F,\ g \text{ violates } e\}.
\]
\begin{definition}[Labeled violation matrix]
\label{def:violation-matrix}
The labeled violation matrix is the linear map
\[
V: A \to \mathbb{F}_2^{\mathcal{L}},
\]
whose $j$-th column encodes which labeled pairs $(e,C)\in\mathcal{L}$ are violated by generator $g^j$.
Rows are indexed by labeled pairs $\ell_i=(e_i,C_i)\in\mathcal{L}$, and columns are indexed by generators $g^j$. The entries of $V$ are
\[
V_{ij} = 1
\quad\Longleftrightarrow\quad
C_i = C_{g^j} \quad\text{and}\quad g^j \text{ violates } e_i.
\]
\end{definition}

\noindent Explicitly, $V$ is the $\lvert\mathcal{L}\rvert\times m$ binary matrix whose columns are the labeled violation patterns of each generator:
\[
V \;=\;
\begin{bmatrix}
  \Big| & \Big| & & \Big| \\[4pt]
  v_{g^1} & v_{g^2} & \cdots & v_{g^m} \\[4pt]
  \Big| & \Big| & & \Big|
\end{bmatrix},
\]
where each column $v_{g^j}\in\mathbb{F}_2^{\mathcal{L}}$ has a $1$ in row $(e,C)$ if and only if $g^j$ violates active edge $e$ with mirror label $C=C_{g^j}$.
Rows are indexed by labeled pairs $\mathcal{L}=\{(e_1,C_1),\ldots\}$, and columns are indexed by the generators $\mathcal{G}=\{g^1,\ldots,g^m\}$.

Thus, $V\alpha$ represents the net labeled violation pattern of the generator combination $\alpha$. The same active edge may be crossed by transformations using different predecessor mirrors; such violations are kept in distinct rows and cannot cancel unless their mirror labels agree.

\section{Main Results}
\label{sec:results}

The central objective of this section is to establish a precise equivalence between the physical geometry of distance-preserving branch shifts and the algebraic structure of the binary subspace $\mathcal{K}_F \subseteq \mathbb{F}_2^{B_{\mathrm{c}}}$. Specifically, we show that the algebraic space $\mathcal{K}_F$ is constructively feasible—meaning every shift in $\mathcal{K}_F$ preserves all active edge lengths—and that, under a natural generic separation condition, no other distance-preserving shifts can exist.

Geometrically, moving from a branch code $s$ to $s \oplus h$ via a branch shift $h \in \mathbb{F}_2^{B_{\mathrm{c}}}$ corresponds to a sequence of partial reflections; whenever such a constrained shift is added to a full branch code, it is extended by zero on $B_{\mathrm{f}}$. A shift preserves the validity of all active constraints if it is produced by a generator combination with zero net labeled violations, meaning $\alpha \in \ker V$. Because multiple generator combinations can yield the same branch mask, the candidate feasible branch-shift space is the image of this kernel under the mask matrix $M$:
\[
\mathcal{K}_F = M(\ker V).
\]

\begin{lemma}[Branch-shift rank]
\label{lem:rank}
For the feasible branch shift space $\mathcal{K}_F = M(\ker V)$,
\[
\dim \mathcal{K}_F
=
\operatorname{rank}_{\mathbb{F}_2}\begin{bmatrix} M \\ V \end{bmatrix}
-
\operatorname{rank}_{\mathbb{F}_2}(V).
\]
\end{lemma}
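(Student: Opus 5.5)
This is pure linear algebra over $\mathbb{F}_2$. The plan is to apply rank--nullity twice: once to the restriction of $M$ to $\ker V$, and once to the stacked map
\[
\Phi=\begin{bmatrix} M \\ V \end{bmatrix} : A \to \mathbb{F}_2^{B_{\mathrm{c}}}\oplus\mathbb{F}_2^{\mathcal{L}},\qquad \Phi\alpha=(M\alpha,V\alpha).
\]
We then compare the two resulting expressions. No geometric input is needed: the statement depends only on $M$ and $V$ as linear maps on the same coefficient space $A=\mathbb{F}_2^{\mathcal{G}}$, with $\dim A=m$.

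\emph{Step 1: rank--nullity on $\ker V$.} Consider the restriction $M|_{\ker V}:\ker V\to\mathbb{F}_2^{B_{\mathrm{c}}}$. Its image is $M(\ker V)=\mathcal{K}_F$ by definition. Its kernel is $\{\alpha: V\alpha=0,\ M\alpha=0\}=\ker M\cap\ker V=\ker\Phi$. Rank--nullity therefore gives
\[
\dim\mathcal{K}_F=\dim\ker V-\dim\ker\Phi .
\]

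\emph{Step 2: rank--nullity on $A$.} Apply it to $V$ and to $\Phi$ as maps out of $A$. This gives $\dim\ker V=m-\operatorname{rank}(V)$ and $\dim\ker\Phi=m-\operatorname{rank}\Phi$. Here I would note that the rank of the block matrix $\Phi$ is the dimension of its image, since the row stacking realizes exactly the map $\alpha\mapsto(M\alpha,V\alpha)$.

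\emph{Step 3: combine.} Substituting Step 2 into Step 1, the $m$'s cancel and we obtain $\dim\mathcal{K}_F=\operatorname{rank}\Phi-\operatorname{rank}(V)$, which is the claimed formula.

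There is no real obstacle. The only point needing care is the identification $\ker(M|_{\ker V})=\ker\Phi$. One should also confirm that duplicate labeled columns and the possibility $\mathcal{L}=\varnothing$ (where $V$ is the zero map, $\ker V=A$, and the formula reduces to $\operatorname{rank}M$) are handled uniformly. They are, since the argument uses only finite-dimensionality over a field.
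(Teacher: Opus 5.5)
Your proposal is correct and follows the paper's proof essentially verbatim: rank--nullity for $M|_{\ker V}$ gives $\dim\mathcal{K}_F=\dim\ker V-\dim(\ker V\cap\ker M)$, you identify $\ker V\cap\ker M$ with the kernel of the stacked matrix, and you substitute $m-\operatorname{rank}$ for both kernel dimensions. Your remark on the degenerate case $\mathcal{L}=\varnothing$ is a harmless addition that the same argument already covers.
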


\begin{proof}
Let $m = \lvert \mathcal{G} \rvert = \dim A$. Since $\mathcal{K}_F = M(\ker V)$, we restrict $M$ to $\ker V$:
\[
M\big|_{\ker V} : \ker V \to \mathbb{F}_2^{B_{\mathrm{c}}}.
\]
By rank-nullity applied to this restricted map,
\[
\dim \mathcal{K}_F
=
\dim \ker V
-
\dim(\ker V \cap \ker M).
\]
Now $\dim \ker V = m - \operatorname{rank}_{\mathbb{F}_2}(V)$, and
\[
\ker V \cap \ker M
=
\ker\begin{bmatrix} M \\ V \end{bmatrix}.
\]
Therefore,
\[
\dim(\ker V \cap \ker M)
=
m - \operatorname{rank}_{\mathbb{F}_2}\begin{bmatrix} M \\ V \end{bmatrix}.
\]
Substituting these dimensions into the rank-nullity equation gives:
\[
\begin{aligned}
\dim \mathcal{K}_F &= (m - \operatorname{rank}_{\mathbb{F}_2}(V)) - \left(m - \operatorname{rank}_{\mathbb{F}_2}\begin{bmatrix} M \\ V \end{bmatrix}\right) \\
&= \operatorname{rank}_{\mathbb{F}_2}\begin{bmatrix} M \\ V \end{bmatrix} - \operatorname{rank}_{\mathbb{F}_2}(V).
\end{aligned}
\]
This completes the proof.
\end{proof}

The next lemma isolates the local geometric step needed below, showing that algebraic branch flips correspond to valid Euclidean reflections under suitable compatibility conditions.

\begin{definition}[Mirror-compatibility]
\label{def:mirror-compatible}
Let $C$ be a predecessor clique, $S \subseteq B_{\mathrm{c}}$ be a subset of constrained branches, and $F$ be the active edge set. The block $(S,C)$ is \emph{mirror-compatible} with $F$ if, for every active edge $e = \{a,b\} \in F$ that crosses the block boundary (i.e., $\lvert e \cap S \rvert = 1$), the fixed endpoint belongs to $C$, meaning $e \setminus S \subseteq C$.
\end{definition}

Mirror-compatibility is a purely topological and combinatorial requirement on how the block boundary interacts with the active constraints. The next lemma shows that when a block satisfies this combinatorial condition alongside appropriate causality constraints, the algebraic branch operation is geometrically realized by a valid Euclidean reflection that preserves all active distance constraints.

\begin{lemma}[Block admissibility]
\label{lem:block-admissibility}
Let $C$ be a predecessor clique with $r(C)=\min\{q\in B_{\mathrm{c}}: U_q=C\}$, and let $S\subseteq B_{\mathrm{c}}$ satisfy $S\subseteq \{r(C),\ldots,n\}$ and $S\cap C=\varnothing.$ If $(S,C)$ is mirror-compatible with the active edge set $F$ (Definition~\ref{def:mirror-compatible}), then for any nondegenerate branch embedding $x(s)$ (with $s_{\mathrm{c}}=s|_{B_{\mathrm{c}}}$), let $R_C$ be the Euclidean reflection across the affine hull $\operatorname{aff}\{x_c(s) : c \in C\}$. Then, for $i \in L$, the coordinates
\[
y_i =
\begin{cases}
R_C(x_i(s)), & i\in S,\\
x_i(s), & i\in L\setminus S
\end{cases}
\]
are obtained on the active subsystem by the constrained branch code $s_{\mathrm{c}}\oplus \mathbf{1}_S$, and every active edge length is preserved:
\[
\|y_a-y_b\|_2=\|x_a(s)-x_b(s)\|_2
\qquad
\text{for all } \{a,b\}\in F.
\]
\end{lemma}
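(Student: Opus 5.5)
The proof splits into two parts: distance preservation, which is elementary Euclidean geometry, and branch-code identification, which uses the lateration recursion. Throughout write $x_i=x_i(s)$ and $R=R_C$. Nondegeneracy means the points $\{x_c : c\in C\}$ are affinely independent, so $\operatorname{aff}\{x_c : c\in C\}$ is a hyperplane and $R$ is a well-defined isometry of $\mathbb{R}^K$ that fixes each $x_c$ with $c\in C$.

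\emph{Distance preservation.} Take $\{a,b\}\in F$ and split into cases according to $\lvert\{a,b\}\cap S\rvert$.
\begin{itemize}
\item If both endpoints lie outside $S$, then $y_a=x_a$ and $y_b=x_b$, so nothing changes.
\item If both lie in $S$, then $\|R x_a - R x_b\|=\|x_a-x_b\|$ because $R$ is an isometry.
\item If exactly one lies in $S$, say $a\in S$, mirror-compatibility gives $b\in C$. Then $R x_b=x_b$, and $\|R x_a - x_b\| = \|R x_a - R x_b\| = \|x_a-x_b\|$.
\end{itemize}
This argument needs no genericity.

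\emph{Branch-code identification.} Show that $y$ is the embedding produced by $s_{\mathrm c}\oplus\mathbf 1_S$ on the active subsystem. Proceed by induction along the vertex order over $i\in L$. Note first that $L$ is closed under $\operatorname{Fix}$, so $U_i\subseteq L$ for every $i\in L$; hence the inductive hypothesis already covers $U_i$.

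Each $y_i$ must be one of the two lateration solutions relative to $\{y_u\}_{u\in U_i}$, with the correct bit. The active edges include the discretization edges $\{u,i\}$ for $i\in L$, since both endpoints lie in $L$. By the distance-preservation step, $y_i$ has the prescribed distances to the $y_u$, so $y_i$ is one of the two intersection points. It remains to read off which one, in four cases.
\begin{itemize}
\item If $i\notin S$ and $U_i\cap S=\varnothing$, everything is unchanged and the bit is unchanged.
\item If $i\in S$ and $U_i\subseteq S\cup C$, the whole local configuration $\{y_u\}\cup\{y_i\}$ is the image under $R$ of the original one, because $R$ fixes $C$. Since $R$ is orientation-reversing, the side of $y_i$ relative to $\operatorname{aff}(U_i)$, measured by the sign of the determinant in a fixed orientation convention, flips. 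So the bit toggles, as $\mathbf 1_S$ requires.
\item If $i\notin S$ but some predecessor lies in $S$, then $i$ and that predecessor are joined by an active edge crossing the boundary. Mirror-compatibility forces $i\in C$. But $i\in C$ means $i< r(C)$, and $S\subseteq\{r(C),\ldots,n\}$, so every element of $S$ exceeds $i$ and cannot be a predecessor of $i$. This case cannot occur.
\item If $i\in S$ and some $u\in U_i$ lies outside $S$, mirror-compatibility gives $u\in C$. So $U_i\subseteq S\cup C$, which falls under the second case.
\end{itemize}
Since $S\cap C=\varnothing$, no vertex of $C$ is ever reflected, which keeps these cases consistent.

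\emph{Main obstacle.} The delicate step is the orientation claim in the second case. The branch bit must be defined intrinsically, for example as the sign of $\det[x_{u_2}-x_{u_1},\ldots,x_{u_K}-x_{u_1},x_i-x_{u_1}]$ with $U_i$ in increasing order. Reflecting all $K+1$ points by $R$ multiplies this determinant by $\det(\text{linear part of }R)=-1$. Nondegeneracy ensures the determinant is nonzero, so the sign change is well defined. I would state this sign convention explicitly and then verify that the determinant identity holds even when some points of $U_i$ are fixed: they lie in $C$, hence on the mirror, so applying $R$ to them changes nothing.
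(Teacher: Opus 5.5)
Your proof is correct and follows the paper's argument. It uses the same three-case check of active edge lengths, and the same induction along the order restricted to $L$: for $i\in S$ it shows $U_i\subseteq S\cup C$, so the local configuration is moved rigidly by the orientation-reversing $R_C$ and the bit toggles, and for $i\notin S$ it shows the predecessor simplex is unchanged. Your explicit determinant convention for the branch bit, and your use of $S\subseteq\{r(C),\ldots,n\}$ to rule out reflected predecessors of vertices in $C$ itself (a case the paper's proof, which only treats $i\in L\setminus(S\cup C)$, leaves implicit), make your version slightly more complete.
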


\begin{proof}
First consider the seed and discretization edges in the active subsystem; seed-clique edges are fixed. If $i\in S$, then $i\in B_{\mathrm{c}}$, hence $i\in L$. Since $L$ is closed under fixing sets, every predecessor $u\in U_i$ also lies in $L$, so the discretization edge $\{u,i\}$ belongs to $E_D[L]\subseteq F$. If $u\notin S$, then $\{u,i\}$ crosses the block boundary, and mirror-compatibility gives $u\in C$. Thus we have $U_i\subseteq S\cup C$. Consequently, the transformed predecessor simplex of $i$ is the image of the original predecessor simplex under the same reflection $R_C$: predecessors in $S$ are reflected, and predecessors in $C$ are fixed by $R_C$.

If $i\in L\setminus (S\cup C)$, then no predecessor of $i$ can lie in $S$. Indeed, if some $u\in U_i\cap S$, then the active edge $\{u,i\}\in E_D[L]$ would cross the block boundary, and mirror-compatibility would force $i\in C$, a contradiction. Hence, the predecessor simplex of such an active vertex is unchanged.

It follows, by induction in the DDGP order restricted to $L$, that the coordinates $y_i$ are produced by valid lateration choices on the active subsystem. Vertices in $L\setminus S$ keep the same local lateration choice. Vertices in $S$ have both their coordinate and their predecessor simplex transformed by the reflection $R_C$; since $R_C$ reverses the local orientation across the predecessor affine hull, the corresponding lateration branch bit is toggled. Therefore the resulting constrained branch code is $s_{\mathrm{c}}\oplus \mathbf{1}_S$.

It remains to check active edge lengths. Let $e=\{a,b\}\in F$. If both endpoints belong to $S$, then both are acted on by the same Euclidean isometry $R_C$, so the length of $e$ is preserved. If neither endpoint belongs to $S$, both endpoints are fixed, so the length is preserved. Finally, suppose exactly one endpoint belongs to $S$, say $a\in S$ and $b\notin S$. By mirror-compatibility, $b\in C$, hence $R_C$ fixes $x_b(s)$. Therefore
\[
\|y_a-y_b\|_2
=
\|R_C(x_a(s))-x_b(s)\|_2
=
\|R_C(x_a(s))-R_C(x_b(s))\|_2
=
\|x_a(s)-x_b(s)\|_2.
\]
Thus every edge in $F$ is preserved.
\end{proof}

The following lemma establishes the crucial link between the algebraic kernel of the violation matrix and the geometric compatibility of the corresponding deformed blocks. Specifically, any selection of generators in the kernel of $V$ yields a set of blocks that are combinatorially mirror-compatible with the active constraints.

\begin{lemma}[Kernel-induced mirror-compatibility]
\label{lem:kernel-compatibility}
Let $\alpha \in \ker V$. For any mirror clique $C$, let $\alpha_C = \{g \in \mathcal{G}_C : \alpha_g = 1\}$ be the set of selected generators with mirror $C$, and let $S_C = \bigoplus_{g \in \alpha_C} S_g$ be the symmetric difference of their supports. Then the block $(S_C, C)$ is mirror-compatible with the active edge set $F$ in the sense of Definition~\ref{def:mirror-compatible}.
\end{lemma}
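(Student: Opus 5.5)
The idea is to argue one labeled row of $V$ at a time and use a parity count. Fix a mirror clique $C$ and an active edge $e=\{a,b\}\in F$ with $\lvert e\cap S_C\rvert=1$. Say $a\in S_C$ and $b\notin S_C$. We must show $b\in C$, so we argue by contradiction and assume $b\notin C$. The contradiction will be that the $(e,C)$ coordinate of $V\alpha$ equals $1$, against $\alpha\in\ker V$.

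\textbf{Step 1: supports avoid their mirror.} First I will record that every generator $g\in\mathcal{G}_C$ satisfies $S_g\cap C=\varnothing$. For a cone generator $g_q$ with $U_q=C$, every element of $\operatorname{Cone}(q)$ is $\ge q$ in the order, while every element of $U_q$ is $<q$. For a base generator $g_C$, the support is a union of such cones over $w\in\operatorname{Gen}(C)$, and each of these is disjoint from $U_w=C$. It follows that $S_C\cap C=\varnothing$. In particular $a\notin C$, and more generally no element of $C$ is ever toggled by a generator in $\alpha_C$.

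\textbf{Step 2: parity identity.} Write $\chi_X$ for the indicator of a set $X$. A single generator $g$ crosses $e$ exactly when $\chi_{S_g}(a)+\chi_{S_g}(b)=1$ in $\mathbb{F}_2$. Since $\chi_{S_C}=\sum_{g\in\alpha_C}\chi_{S_g}$ over $\mathbb{F}_2$, we get
\[
1=\chi_{S_C}(a)+\chi_{S_C}(b)=\sum_{g\in\alpha_C}\bigl(\chi_{S_g}(a)+\chi_{S_g}(b)\bigr).
\]
So the number of generators in $\alpha_C$ that cross $e$ is odd, and in particular at least one.

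\textbf{Step 3: every crossing is a violation.} Let $g\in\alpha_C$ cross $e$. Its fixed endpoint $e\setminus S_g$ is either $a$ or $b$. By Step 1 and the assumption $b\notin C$, neither endpoint lies in $C=C_g$, so $g$ violates $e$ with label $C$. Hence the pair $(e,C)$ belongs to $\mathcal{L}$. Only generators with $C_g=C$ contribute to that row, so by Definition~\ref{def:violation-matrix}
\[
(V\alpha)_{(e,C)}=\#\{g\in\alpha_C: g \text{ violates } e\}=\#\{g\in\alpha_C: g\text{ crosses } e\}\equiv 1 \pmod 2.
\]
This contradicts $\alpha\in\ker V$. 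Therefore $b\in C$, i.e.\ $e\setminus S_C\subseteq C$, and the block $(S_C,C)$ is mirror-compatible.

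The main obstacle is Step 3. In general a crossing generator whose fixed endpoint lies in $C$ does not violate $e$, so crossings and violations need not have the same parity. The disjointness fact from Step 1 is what closes this gap: an endpoint in $S_C$ can never be in $C$, and the other endpoint is outside $C$ by assumption.
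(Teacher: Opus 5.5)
Your proof is correct and follows the paper's own argument: assume the fixed endpoint $b\notin C$, use $S_C\cap C=\varnothing$ so that every generator in $\alpha_C$ crossing $e$ also violates $e$ with label $C$, and note that the parity identity makes the number of such generators odd, so $(V\alpha)_{(e,C)}=1$, contradicting $\alpha\in\ker V$. The one addition is your Step~1, where you prove $S_g\cap C_g=\varnothing$ from the order: cone elements are $\ge q$ while predecessors are $<q$. The paper simply asserts this fact.
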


\begin{proof}
Suppose that an active edge $e = \{a,b\} \in F$ crosses $S_C$, meaning that exactly one endpoint lies in $S_C$. Without loss of generality, assume $a \in S_C$ and $b \notin S_C$. Let $\chi_g: V \to \{0,1\}$ denote the indicator function of $S_g$ for each $g \in \alpha_C$. The indicator function of $S_C$ satisfies
\[
\chi_{S_C}(v) \equiv \sum_{g \in \alpha_C} \chi_g(v) \pmod 2.
\]
Because generator supports are disjoint from their own mirror cliques, we have $S_g \cap C = \varnothing$ for all $g \in \alpha_C$, which implies $S_C \cap C = \varnothing$ and hence $a \notin C$.

Now, suppose for contradiction that the fixed endpoint $b$ is also not in $C$. A generator $g \in \alpha_C$ violates $e$ with label $C$ if and only if it crosses $e$ (i.e., $\lvert e \cap S_g \rvert = 1$) and the fixed endpoint $e \setminus S_g$ lies outside $C$. Since $a \notin C$ and $b \notin C$, the latter condition is satisfied for every crossing generator. The number of such crossing generators in $\alpha_C$ is
\[
\sum_{g \in \alpha_C} \lvert e \cap S_g \rvert
=
\sum_{g \in \alpha_C} (\chi_g(a) + \chi_g(b))
\equiv
\chi_{S_C}(a) + \chi_{S_C}(b)
\equiv
1 + 0
\equiv
1 \pmod 2.
\]
Thus, an odd number of generators in $\alpha_C$ violate $e$ with label $C$, contributing an odd parity to the row $(e,C)$ of the violation product $V\alpha$. This implies $(V\alpha)_{(e,C)} \equiv 1 \pmod 2$, which directly contradicts the assumption that $\alpha \in \ker V$ (which requires $V\alpha = 0$). Hence, the fixed endpoint $b$ must lie in $C$, establishing that the block $(S_C, C)$ is mirror-compatible with $F$.
\end{proof}

Using this algebraic-to-geometric bridge, we can now prove that any branch shift in the range of the kernel of $V$ preserves all active edge lengths, meaning it maps valid configurations to valid configurations.

\begin{lemma}[Zero-violation preservation]
\label{lem:preservation}
Let $\alpha \in \ker V$ and let $h = M\alpha$. If $s_{\mathrm{c}} \in \Xi_F$, then $s_{\mathrm{c}} \oplus h \in \Xi_F$.
\end{lemma}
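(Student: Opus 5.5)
The plan is to decompose the generator combination $\alpha$ by mirror label and apply the reflections one clique at a time. For each mirror clique $C$ with $\alpha_C\neq\varnothing$, set $S_C=\bigoplus_{g\in\alpha_C}S_g$, as in Lemma~\ref{lem:kernel-compatibility}. Since masks add over $\mathbb{F}_2$, we have
\[
h=M\alpha=\bigoplus_{C}\mathbf{1}_{S_C},
\]
with $C$ ranging over the distinct mirror cliques. Order these cliques $C_1,\ldots,C_t$ and write $h_k=\mathbf{1}_{S_{C_k}}$. Then
\[
s_{\mathrm{c}}\oplus h=(\cdots((s_{\mathrm{c}}\oplus h_1)\oplus h_2)\cdots)\oplus h_t.
\]
It therefore suffices to show that each single step $s'_{\mathrm{c}}\mapsto s'_{\mathrm{c}}\oplus h_k$ maps $\Xi_F$ into $\Xi_F$, and then induct on $k$.

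For a single step, Lemma~\ref{lem:block-admissibility} will be applied to the block $(S_{C_k},C_k)$ and the current feasible code $s'_{\mathrm{c}}$. Mirror-compatibility of $(S_{C_k},C_k)$ with $F$ is supplied by Lemma~\ref{lem:kernel-compatibility}, because $\alpha\in\ker V$. That lemma is a purely combinatorial statement, so it holds regardless of the current code. The other two hypotheses are checked on each generator support $S_g$ with $C_g=C_k$, and then passed to the symmetric difference.

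\textbf{Disjointness, $S\cap C=\varnothing$.} For a cone generator $g_q$ with $U_q=C$, every vertex of $\operatorname{Cone}(q)$ is $\ge q$, and every element of $C=U_q$ is $<q$. For a base generator $g_C$, the same holds for each root $w\in\operatorname{Gen}(C)$.

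\textbf{Causality, $S\subseteq\{r(C),\ldots,n\}$.} Cone elements are $\ge$ their root, and every root $w$ satisfies $U_w=C$, hence $w\ge r(C)$.

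Both conditions are preserved under symmetric difference, since $S_C\subseteq\bigcup_g S_g$.

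Lemma~\ref{lem:block-admissibility} then says the code $s'_{\mathrm{c}}\oplus h_k$ realizes the reflected coordinates $y$ on the active subsystem, and every active edge length equals that of $x(s')$. Those lengths are $d_{ab}$ because $s'_{\mathrm{c}}\in\Xi_F$. Hence $s'_{\mathrm{c}}\oplus h_k\in\Xi_F$, and the induction closes.

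The main obstacle I expect is the nondegeneracy hypothesis in Lemma~\ref{lem:block-admissibility}. It must hold for every intermediate embedding $x(s')$, not just the original one, and the reflections must remain well defined: $\operatorname{aff}\{x_c(s'):c\in C_k\}$ has to be a genuine hyperplane. I plan to handle this through the standing generic assumptions. Each intermediate code is again a branch code of the same instance, and nondegeneracy of lateration is a property of all branch embeddings under generic parameters, so it transfers. A subtler point is that the reflection for clique $C_k$ is taken across the \emph{current} positions of $C_k$, which may have been moved by earlier steps. Lemma~\ref{lem:block-admissibility} is stated for an arbitrary nondegenerate $x(s')$, so this causes no problem: the conclusion concerns only the resulting code and the edge lengths, not the coordinates themselves. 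If needed, I would also note that the order of the cliques is irrelevant, since the final code $s_{\mathrm{c}}\oplus h$ is independent of it.
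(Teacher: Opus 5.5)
Your proposal is correct and follows the paper's proof. Both group the selected generators by mirror clique, obtain mirror-compatibility of each block $(S_C,C)$ from Lemma~\ref{lem:kernel-compatibility}, check $S_C\cap C=\varnothing$ and $S_C\subseteq\{r(C),\ldots,n\}$, and apply Lemma~\ref{lem:block-admissibility} block by block, with accumulated mask $M\alpha=h$. The one difference is ordering: the paper sorts the cliques by $r(C)$ and applies them in reverse so that no block moves a mirror clique used later, whereas your induction on membership in $\Xi_F$ makes the order irrelevant. That is legitimate because the hypotheses of Lemma~\ref{lem:block-admissibility} are purely combinatorial and the lemma holds for any nondegenerate embedding.
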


\begin{proof}
Fix a full extension $s\in\mathbb{F}_2^B$ of $s_{\mathrm{c}}$; active-edge lengths are independent of free bits. We group the selected generators in $\alpha$ by mirror clique. For each mirror clique $C$ that appears among the generators, let $\alpha_C = \{g \in \mathcal{G}_C : \alpha_g = 1\}$ and let $S_C = \bigoplus_{g \in \alpha_C} S_g$ be the symmetric difference of their supports. By Lemma~\ref{lem:kernel-compatibility}, the block $(S_C, C)$ is mirror-compatible with the active edge set $F$. Moreover, since $S_g \cap C = \varnothing$ for all $g \in \alpha_C$, we have $S_C \cap C = \varnothing$.

For any mirror clique $C$ that appears among the generators, let
\[
r(C)=\min\{q\in B_{\mathrm{c}}: U_q=C\}
\]
be the first constrained vertex generated from $C$. Order the nonzero mirror cliques $C_1,\ldots,C_k$ so that $r(C_1)<\cdots<r(C_k)$. Because the support of any generator with mirror $C_i$ only contains vertices discretized at or after $r(C_i)$, we have $S_{C_i}\subseteq\{r(C_i),\ldots,n\}$, so Lemma~\ref{lem:block-admissibility} applies to each block.

Apply the corresponding admissible blocks in reverse order $C_k,\ldots,C_1$. If $j>i$, then $S_{C_j}\subseteq\{r(C_j),\ldots,n\}$ while $C_i\subseteq\{1,\ldots,r(C_i)-1\}$, so $S_{C_j}\cap C_i=\varnothing$. Thus later blocks do not move the mirror clique needed by earlier blocks. Each application preserves all active edge lengths by Lemma~\ref{lem:block-admissibility}. The accumulated constrained branch mask is
\[
\bigoplus_C \mathbf{1}_{S_C}
=
\bigoplus_{g:\,\alpha_g=1} m_g
=
\bigoplus_{g^j:\,\alpha_j=1} m_{g^j}
=
M\alpha
=
h.
\]
Therefore the final constrained code is $s_{\mathrm{c}}\oplus h$, and since all active edge lengths have been preserved, $s_{\mathrm{c}}\oplus h\in\Xi_F$.
\end{proof}

Lemma~\ref{lem:preservation} proves the constructive inclusion: shifts in $\mathcal{K}_F$ generate feasible constrained codes from any feasible reference code. The converse could fail only if a shift outside $\mathcal{K}_F$ preserved all active distances by an accidental geometric coincidence. The following hypothesis excludes exactly those extra solutions.

\begin{definition}[Nondegenerate DDGP parameters]
\label{def:nondegenerate-params}
Fix an ordered DDGP template, which determines $M$, $V$, and $\mathcal{K}_F = M(\ker V)$. A parameter choice $\theta$ for this template is nondegenerate if every lateration step is well defined and, whenever the predecessors of a non-seed vertex have been placed, the corresponding intersection of $K$ spheres consists of two distinct reflected choices. 
\end{definition}

Let $p$ be the dimension of the chosen parameter space for the fixed template. We denote by $\Theta \subseteq \mathbb{R}^p$ the set of nondegenerate parameter choices; in the usual Euclidean parameterization this set is open and dense. For any parameter vector $\theta \in \Theta$, we write $x^\theta(s)$ for the unique physical lateration embedding associated with the branch code $s \in \mathbb{F}_2^B$.

While nondegeneracy ensures that the binary search tree of realizations is topologically well-defined at every step, it is a local condition. It does not prevent "accidental" global distance preservation where two distinct branches, separated by a reflection shift $h \notin \mathcal{K}_F$ not justified by the symmetry group, happen to satisfy all pruning constraints due to an extraordinary numerical alignment of the parameters. To mathematically rule out these exceptional coordinate alignments and guarantee that the algebraic kernel $\mathcal{K}_F$ characterizes \emph{exactly} the viable branch moves, we introduce a stronger, global geometric requirement.

\begin{definition}[Mirror-separated parameters]
\label{def:mirror-sep}
A nondegenerate parameter choice $\theta \in \Theta$ is mirror-separated if, for every constrained shift $h\in\mathbb{F}_2^{B_{\mathrm{c}}}\setminus\mathcal{K}_F$ (extended by zero on $B_{\mathrm{f}}$) and every full branch code $s\in\mathbb{F}_2^B$, there exists an active edge $\{a,b\} \in F$ such that
\[
\|x^\theta_a(s \oplus h) - x^\theta_b(s \oplus h)\|_2^2 \neq \|x^\theta_a(s) - x^\theta_b(s)\|_2^2.
\]
\end{definition}

\begin{remark}[A genericity criterion for mirror separation]
\label{rem:mirror-separation-genericity}
Mirror separation rules out accidental geometric coincidences in which a shift $h\notin\mathcal{K}_F$ nevertheless preserves all active edge distances. Fix the ordered template, the active subsystem $L$, and the active edge set $F$. For each constrained branch shift $h\in\mathbb{F}_2^{B_{\mathrm{c}}}\setminus\mathcal{K}_F$ and active edge $e=\{a,b\}\in F$, consider the discrepancy function
\[
\Delta_{h,e}(\theta;s)
=
\|x^\theta_a(s\oplus h)-x^\theta_b(s\oplus h)\|_2^2
-
\|x^\theta_a(s)-x^\theta_b(s)\|_2^2,
\]
defined on the nondegenerate parameter choices $\theta\in\Theta$ for which the corresponding branch embeddings exist. Mirror separation fails exactly when, for some shift $h\notin\mathcal{K}_F$ and some branch code $s$, all active-edge discrepancies vanish:
\[
\Delta_{h,e}(\theta;s)=0
\qquad
\text{for every } e\in F.
\]
Consequently, if for every $h\notin\mathcal{K}_F$ and every branch code $s$ at least one discrepancy $\Delta_{h,e}(\cdot;s)$ is not identically zero on the relevant nondegenerate parameter component, then the failure set for mirror separation is contained in a finite union of proper exceptional sets defined by the vanishing of nonzero discrepancy functions. Under this non-identity criterion, mirror separation holds for almost all such nondegenerate parameters, in the same Lebesgue-measure sense used for generic DMDGP statements and zero-measure pruning coincidences in~\cite{liberti2012discretizable}.
\end{remark}

Finally, the main theorem states that, under mirror separation, the converse also holds on the pruning-constrained branch set. The full count then follows by multiplying by the free branch decisions in $B_{\mathrm{f}}$.

\begin{theorem}[Rank-count theorem]
\label{thm:main}
Assume $K \ge 2$ and $\Xi \ne \varnothing$. Assume also that the present predecessor and edge-length parameters are mirror-separated in the sense of Definition~\ref{def:mirror-sep}. Then, for any reference solution $s^\ast \in \Xi$, with constrained restriction $s^\ast_{\mathrm{c}}=s^\ast|_{B_{\mathrm{c}}}$,
\[
\Xi_F
=
s^\ast_{\mathrm{c}} \oplus \mathcal{K}_F
=
s^\ast_{\mathrm{c}} \oplus M(\ker V).
\]
Consequently,
\[
\lvert \Xi\rvert
=
2^{
f+
\operatorname{rank}_{\mathbb{F}_2}\begin{bmatrix} M \\ V \end{bmatrix}
-
\operatorname{rank}_{\mathbb{F}_2}(V)
}.
\]
\end{theorem}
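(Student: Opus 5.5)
The plan is to prove the set equality $\Xi_F = s^\ast_{\mathrm{c}}\oplus\mathcal{K}_F$ by two inclusions. Cardinalities then follow from Lemma~\ref{lem:rank} together with $\lvert\Xi\rvert = 2^f\lvert\Xi_F\rvert$.

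First, note that $s^\ast\in\Xi$ satisfies every edge of $E$, and $F\subseteq E$. Hence $s^\ast_{\mathrm{c}}\in\Xi_F$. For the inclusion $s^\ast_{\mathrm{c}}\oplus\mathcal{K}_F\subseteq\Xi_F$, take $h\in\mathcal{K}_F$ and write $h=M\alpha$ with $\alpha\in\ker V$. Lemma~\ref{lem:preservation}, applied with $s_{\mathrm{c}}=s^\ast_{\mathrm{c}}$, gives $s^\ast_{\mathrm{c}}\oplus h\in\Xi_F$ directly. Nothing beyond citing the lemma is needed here. The structural work (Lemmas~\ref{lem:kernel-compatibility} and~\ref{lem:block-admissibility}) is already packaged inside it.

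For the reverse inclusion, let $t_{\mathrm{c}}\in\Xi_F$ and set $h=t_{\mathrm{c}}\oplus s^\ast_{\mathrm{c}}\in\mathbb{F}_2^{B_{\mathrm{c}}}$, extended by zero on $B_{\mathrm{f}}$. Then $s^\ast\oplus h$ is a full code whose constrained restriction is $t_{\mathrm{c}}$. Since $t_{\mathrm{c}}\in\Xi_F$ and $\Xi_F$ is well-defined (free bits do not affect edges of $F$), every active edge $\{a,b\}\in F$ satisfies
\[
\|x_a(s^\ast\oplus h)-x_b(s^\ast\oplus h)\|_2 = d_{ab} = \|x_a(s^\ast)-x_b(s^\ast)\|_2 .
\]
Squared lengths therefore agree on all of $F$. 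Suppose $h\notin\mathcal{K}_F$. Mirror separation (Definition~\ref{def:mirror-sep}), applied with $s=s^\ast$ and this $h$, yields an active edge whose squared length changes, which is a contradiction. Hence $h\in\mathcal{K}_F$ and $t_{\mathrm{c}}\in s^\ast_{\mathrm{c}}\oplus\mathcal{K}_F$.

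The hypothesis $K\ge2$ and nondegeneracy only ensure that the embeddings $x(s)$ are well defined with genuine binary branching, so that $\Xi$, $\Xi_F$ and the lemmas make sense. I would remark on this rather than use it substantively.

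Finally, $\Xi_F$ is a coset of the subspace $\mathcal{K}_F$, so $\lvert\Xi_F\rvert=2^{\dim\mathcal{K}_F}$. By Lemma~\ref{lem:rank},
\[
\dim\mathcal{K}_F=\operatorname{rank}_{\mathbb{F}_2}\begin{bmatrix} M \\ V \end{bmatrix}-\operatorname{rank}_{\mathbb{F}_2}(V).
\]
Combining this with $\lvert\Xi\rvert=2^f\lvert\Xi_F\rvert$ from Section~\ref{sec:subproblems} gives the stated formula.

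The main point of care is the reverse inclusion. One must check that $s^\ast\oplus h$ with zero extension is exactly the kind of pair $(s,h)$ quantified in Definition~\ref{def:mirror-sep}. One must also check that equality of actual edge lengths on $F$ is what membership in $\Xi_F$ gives, as opposed to merely the prescribed $d_{ab}$. Both hold because $s^\ast$ itself realizes every $d_{ab}$ exactly. Otherwise the theorem is essentially an assembly of the earlier lemmas.
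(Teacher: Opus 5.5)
Your proposal is correct and follows the paper's proof essentially step for step. The inclusion $s^\ast_{\mathrm{c}}\oplus\mathcal{K}_F\subseteq\Xi_F$ comes from Lemma~\ref{lem:preservation}, and the reverse inclusion comes from extending $h=t_{\mathrm{c}}\oplus s^\ast_{\mathrm{c}}$ by zero on $B_{\mathrm{f}}$ and invoking mirror separation at $s=s^\ast$; the count then follows from $\lvert\Xi\rvert=2^f\lvert\Xi_F\rvert$ and Lemma~\ref{lem:rank}, with your explicit check that $s^\ast_{\mathrm{c}}\in\Xi_F$ being a small detail the paper leaves implicit.
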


\begin{proof}
Fix $s^\ast \in \Xi$ and let $s^\ast_{\mathrm{c}} = s^\ast|_{B_{\mathrm{c}}}$.

$(\supseteq)$ For any $h \in \mathcal{K}_F$, Lemma~\ref{lem:preservation} gives
$s^\ast_{\mathrm{c}} \oplus h \in \Xi_F$, so $s^\ast_{\mathrm{c}} \oplus \mathcal{K}_F \subseteq \Xi_F$.

$(\subseteq)$ Let $s_{\mathrm{c}} \in \Xi_F$ and set $h = s_{\mathrm{c}} \oplus s^\ast_{\mathrm{c}}$.
Extend $h$ by zero on $B_{\mathrm{f}}$ and put $\widetilde{s}=s^\ast\oplus h$. Then $\widetilde{s}$ has constrained part $s_{\mathrm{c}}$ and is active-feasible. If $h \notin \mathcal{K}_F$, mirror separation gives an active-edge length difference between $s^\ast$ and $\widetilde{s}$, a contradiction. Hence $h \in \mathcal{K}_F$ and
$\Xi_F \subseteq s^\ast_{\mathrm{c}} \oplus \mathcal{K}_F$.

Together, $\Xi_F = s^\ast_{\mathrm{c}} \oplus M(\ker V)$. Since free branches on
$B_{\mathrm{f}}$ combine independently, $\lvert\Xi\rvert = 2^f \lvert\Xi_F\rvert
= 2^{f + \dim\mathcal{K}_F}$, and Lemma~\ref{lem:rank} gives the rank formula.
\end{proof}

\section{A Small Worked Example}
\label{sec:example}

This section details a fully worked seven-vertex example to concretely illustrate our theoretical framework. While small in scale, this example exhibits all the core features of the general theory: both cone and base generators are present, the labeled violation kernel $\ker V$ is nontrivial, a presentation redundancy in the generator combinations is factored out by mapping the kernel through the mask projection to obtain the feasible branch shift space $\mathcal{K}_F = M(\ker V)$, and one free branch decision contributes a nontrivial factor $2^f$ to the final count.

\subsection{Combinatorial Setup}
\label{sec:example-combo}

Let $K=2$ in $\mathbb{R}^2$. For readability in this example, write $v_i$ for the vertex with global label $i$. The fixed initial clique is $V_0=\{v_1, v_2\}$. Consider the predecessor sets
\[
U_3=\{v_1, v_2\},
\qquad
U_4=\{v_1, v_3\},
\qquad
U_5=\{v_1, v_3\},
\qquad
U_6=\{v_1, v_4\},
\qquad
U_7=\{v_1, v_2\},
\]
and the pruning edge set
\[
E_P=\{\{v_3, v_6\},\,\{v_4, v_5\}\}.
\]
The pruning-relevant vertex set is
\[
L=\{v_1,v_2,v_3,v_4,v_5,v_6\}.
\]
The added vertex $v_7$ is not in the fixing set of any pruning endpoint, so the constrained and free branch sets are
\[
B_{\mathrm{c}}=\{v_3, v_4, v_5, v_6\},
\qquad
B_{\mathrm{f}}=\{v_7\},
\qquad
f=1.
\]
There are three predecessor cliques,
\[
C_0=\{v_1, v_2\},
\qquad
C_1=\{v_1, v_3\},
\qquad
C_2=\{v_1, v_4\}.
\]
The first is the predecessor clique for $v_3$ and the free vertex $v_7$, but only $v_3$ contributes to the constrained generator masks. The second generates $v_4$ and $v_5$, and the third generates $v_6$. Since $v_4$ and $v_5$ depend on $v_3$, and $v_6$ depends on $v_4$, we have
\[
\operatorname{Cone}(v_3)\cap B_{\mathrm{c}}=\{v_3,v_4,v_5,v_6\},
\qquad
\operatorname{Cone}(v_4)\cap B_{\mathrm{c}}=\{v_4, v_6\}.
\]
The free vertex has $\operatorname{Cone}(v_7)\cap B_{\mathrm{c}}=\varnothing$, so its branch decision does not appear in the constrained matrices below.
The graph structure, predecessor dependency arcs, and pruning edges are illustrated in Figure~\ref{fig:worked-example}.

\begin{figure}[htbp]
\centering
\begin{tikzpicture}[
    scale=1.05,
    >=stealth,
    every node/.style={circle, draw, minimum size=8mm, inner sep=0pt, font=\small},
    seed/.style={fill=purple!15, draw=purple!70!black, line width=0.8pt},
    constrained/.style={fill=green!12, draw=green!55!black, line width=0.8pt},
    free/.style={fill=yellow!20, draw=orange!75!black, line width=0.8pt},
    dep/.style={->, thick, black!70},
    prune/.style={dashed, ultra thick, red!85!black}
]
    % Nodes
    \node[seed] (v1) at (0,0) {$v_1$};
    \node[seed] (v2) at (0,-2.25) {$v_2$};
    \node[constrained] (v3) at (4.05,0) {$v_3$};
    \node[constrained] (v4) at (0,2.25) {$v_4$};
    \node[constrained] (v5) at (4.05,2.25) {$v_5$};
    \node[constrained] (v6) at (4.05,-2.25) {$v_6$};
    \node[free] (v7) at (-2.25,-2.25) {$v_7$};
    
    % Initial clique edge
    \draw[thick] (v1) -- (v2);
    
    % Predecessor arcs (directed)
    \draw[dep] (v1) to[bend left=10] (v3);
    \draw[dep] (v2) to[bend right=28] (v3);

    \draw[dep] (v1) to (v4);
    \draw[dep] (v3) to (v4);
    
    \draw[dep] (v1) to[bend left=10] (v5);
    \draw[dep] (v3) to (v5);
    
    \draw[dep] (v1) to[bend right=4] (v6);
    \draw[dep] (v4) to (v6);

    \draw[dep] (v1) to (v7);
    \draw[dep] (v2) to (v7);
    
    % Pruning edges (dashed red)
    \draw[prune] (v4) to (v5);
    \draw[prune] (v3) to (v6);
\end{tikzpicture}
\caption{Combinatorial predecessor dependency graph and pruning edges for the 7-vertex worked example; node positions are schematic. Directed arcs represent predecessor discretization constraints ($E_D$), dashed red lines represent $E_P = \{\{v_3, v_6\}, \{v_4, v_5\}\}$, purple nodes form $V_0$, green nodes are branch-decision vertices in $B_{\mathrm{c}}$, and the yellow node is the free branch vertex $v_7\in B_{\mathrm{f}}$.}
\label{fig:worked-example}
\end{figure}

The generator supports and associated mirror cliques are listed in Table~\ref{tab:generators}. The constrained rows are ordered as $(v_3,v_4,v_5,v_6)$, and the columns are ordered as $(g_3,g_4,g_5,g_6,g_{C_0},g_{C_1},g_{C_2})$. The free branch bit $v_7$ is intentionally omitted from these constrained masks.

\begin{table}[htbp]
\centering
\caption{Generator supports and mirror cliques for the worked example. Each support mask $S_g$ is a subset of $B_{\mathrm{c}} = \{v_3, v_4, v_5, v_6\}$; the rows define the columns $(g_3,g_4,g_5,g_6,g_{C_0},g_{C_1},g_{C_2})$ of the mask matrix $M$.}
\label{tab:generators}
\begin{tabular}{lcc}
\toprule
Generator & Support Mask ($S_g$) & Mirror Clique ($C_g$) \\
\midrule
$g_3$ & $\{v_3, v_4, v_5, v_6\}$ & $C_0 = \{v_1, v_2\}$ \\
$g_4$ & $\{v_4, v_6\}$ & $C_1 = \{v_1, v_3\}$ \\
$g_5$ & $\{v_5\}$ & $C_1 = \{v_1, v_3\}$ \\
$g_6$ & $\{v_6\}$ & $C_2 = \{v_1, v_4\}$ \\
$g_{C_0}$ & $\{v_3, v_4, v_5, v_6\}$ & $C_0 = \{v_1, v_2\}$ \\
$g_{C_1}$ & $\{v_4, v_5, v_6\}$ & $C_1 = \{v_1, v_3\}$ \\
$g_{C_2}$ & $\{v_6\}$ & $C_2 = \{v_1, v_4\}$ \\
\bottomrule
\end{tabular}
\end{table}

The generator mask matrix is
\[
M=
\begin{bmatrix}
1&0&0&0&1&0&0\\
1&1&0&0&1&1&0\\
1&0&1&0&1&1&0\\
1&1&0&1&1&1&1
\end{bmatrix}.
\]

The active edge set for the constrained subproblem is $F = E_D[L] \cup E_P$. Besides the fixed seed clique edge $\{v_1,v_2\}$, the discretization edges induced on $L$ are
\[
\{v_1, v_3\},\{v_2, v_3\},
\{v_1, v_4\},\{v_3, v_4\},
\{v_1, v_5\},\{v_3, v_5\},
\{v_1, v_6\},\{v_4, v_6\}.
\]
The nonzero labeled active-edge violations are summarized in Table~\ref{tab:violations}.

\begin{table}[htbp]
\centering
\caption{Labeled active-edge violations for each generator. The entry \textnormal{none} means that the generator contributes no row to the labeled violation matrix $V$; the nonzero rows are indexed by $(\{v_4,v_5\},C_1)$ and $(\{v_3,v_6\},C_2)$.}
\label{tab:violations}
\begin{tabular}{lcc}
\toprule
Generator & Violated Active Edge ($e$) & Mirror Label ($C_g$) \\
\midrule
$g_3$ & \text{none} & \text{none} \\
$g_4$ & $\{v_4, v_5\}$ & $C_1$ \\
$g_5$ & $\{v_4, v_5\}$ & $C_1$ \\
$g_6$ & $\{v_3, v_6\}$ & $C_2$ \\
$g_{C_0}$ & \text{none} & \text{none} \\
$g_{C_1}$ & \text{none} & \text{none} \\
$g_{C_2}$ & $\{v_3, v_6\}$ & $C_2$ \\
\bottomrule
\end{tabular}
\end{table}

Thus, with rows ordered as the two active-edge labels $(\{v_4, v_5\},C_1)$ and $(\{v_3, v_6\},C_2)$, the labeled violation matrix is
\[
V=
\begin{bmatrix}
0&1&1&0&0&0&0\\
0&0&0&1&0&0&1
\end{bmatrix}.
\]
Write a generator coefficient vector as $\alpha = (a_3,a_4,a_5,a_6,a_{C_0},a_{C_1},a_{C_2}) \in \mathbb{F}_2^7$. Then the zero-violation condition $V\alpha = 0$, with arithmetic over $\mathbb{F}_2$, is equivalent to
\[
a_4+a_5=0,
\qquad
a_6+a_{C_2}=0.
\]
Hence, $a_4=a_5=t$, $a_6=a_{C_2}=u$, and $a_3,a_{C_0},a_{C_1}$ are free. Applying the mask matrix gives
\[
M\alpha
=
t(m_4+m_5)+u(m_6+m_{C_2})+a_3m_3+a_{C_0}m_{C_0}+a_{C_1}m_{C_1}.
\]
The parameter $u$ changes the generator presentation but not the branch mask, because $m_6=m_{C_2}$. Also $m_3=m_{C_0}$ and $m_4+m_5=m_{C_1}$. Therefore,
\[
\mathcal{K}_F
=
M(\ker V)
=
\left\{
\begin{bmatrix}0\\0\\0\\0\end{bmatrix},
\begin{bmatrix}1\\1\\1\\1\end{bmatrix},
\begin{bmatrix}0\\1\\1\\1\end{bmatrix},
\begin{bmatrix}1\\0\\0\\0\end{bmatrix}
\right\}.
\]
The ranks are
\[
\operatorname{rank}_{\mathbb{F}_2}\begin{bmatrix} M \\ V \end{bmatrix}=4,
\qquad
\operatorname{rank}_{\mathbb{F}_2}(V)=2.
\]
Since $f=1$, the solution count is
\[
\lvert \Xi \rvert = 2^{1+4-2} = 8.
\]
The two independent feasible constrained branch shifts are the mask $\{v_3,v_4,v_5,v_6\}$, obtained from $g_3$ or $g_{C_0}$, and the mask $\{v_4, v_5, v_6\}$, obtained from $g_4\oplus g_5$, or equivalently from the base generator $g_{C_1}$. These generate four feasible constrained codes from a reference solution. The independent free bit at $v_7$ doubles this count, giving eight full feasible branch codes.

\subsection{Concrete Geometric Realization}
\label{sec:example-geometry}

To ground this algebraic structure, we provide a concrete geometric realization in $\mathbb{R}^2$ with exact coordinates. This numerical setup illustrates how the combinatorial conditions established by our theory manifest physically as distance-preserving operations.

\paragraph{Reference Coordinate Embedding.}
We place the initial clique $V_0 = \{v_1, v_2\}$ and the branch decision vertex $v_3$ at
\[
v_1 = (0,0), \quad v_2 = (-2,0), \quad v_3 = (0,2).
\]
The remaining vertices are placed at their reference coordinates:
\[
v_4 = (1, 1), \quad v_5 = (2, 2), \quad v_6 = (2, 0), \quad v_7 = (-1,1).
\]
These coordinates determine the physical edge weights. All discretization distances are positive and can be verified directly: $d_{13} = 2$, $d_{23} = 2\sqrt{2}$, $d_{14} = d_{34} = d_{17} = d_{27} = \sqrt{2}$, $d_{15} = 2\sqrt{2}$, $d_{35} = 2$, $d_{16} = 2$, and $d_{46} = \sqrt{2}$. The two active pruning distances to be tested by our constraint matrices are
\[
d_{36} = \|v_3 - v_6\|_2 = 2\sqrt{2},
\qquad
d_{45} = \|v_4 - v_5\|_2 = \sqrt{2}.
\]
The free vertex $v_7$ contributes only its discretization distances $d_{17}$ and $d_{27}$, which do not appear in the active pruning set $F$ since $v_7 \notin L$.

\paragraph{Physical Reflection and Violation Mechanics.}
We can now trace how individual generator moves deform the embedding and interact with the pruning constraints. First, consider the cone generator $g_3$. The mirror for $v_3$ is the $x$-axis, $\operatorname{aff}(v_1, v_2)$. Activating $g_3$ reflects the downstream cone $\{v_3, v_4, v_5, v_6\}$ across this line. Because all affected vertices are reflected together and the fixed endpoints $v_1,v_2$ lie exactly on the mirror, all discretization and pruning distances are trivially preserved. Similarly, the mirror for $v_7$ is also the $x$-axis. Reflecting $v_7$ to $v_7' = (-1,-1)$ preserves its discretization distances $d_{17}$ and $d_{27}$ and has no contact with the pruning edges. This shows geometrically why the branch decision $v_7$ is completely free, contributing the independent factor $2^f = 2$ to the final solution count.

In contrast, isolated branch shifts that do not belong to the kernel $\ker V$ physically violate the pruning constraints. For instance, the mirror for $v_5$ is the $y$-axis, $\operatorname{aff}(v_1, v_3)$. Mirroring only $v_5$ across the $y$-axis yields $v_5' = (-2, 2)$. While this preserves all discretization distances, it stretches the pruning edge $\{v_4, v_5\}$ to $\|v_4 - v_5'\|_2 = \sqrt{10} \ne \sqrt{2}$, confirming a physical violation. This matches the algebraic violation matrix where $g_5$ has a $1$ in row $(\{v_4,v_5\}, C_1)$. Similarly, the cone generator $g_4$ reflects $v_4$ and its descendant $v_6$ across the $y$-axis, yielding $v_4' = (-1,1)$ and $v_6' = (-2,0)$. This preserves the discretization distance $\{v_4, v_6\}$ and the pruning distance $\{v_3, v_6\}$ (since $v_3$ lies on the $y$-axis mirror), but violates the active edge $\{v_4, v_5\}$ to length $\sqrt{10}$. Lastly, the mirror for $v_6$ is the line $y=x$, $\operatorname{aff}(v_1, v_4)$. Reflecting only $v_6$ gives $v_6' = (0,2)$, which collapses the pruning distance to $v_3$ from $2\sqrt{2}$ to $0$, mapping to the algebraic violation of $g_6$ against $\{v_3,v_6\}$.

\paragraph{Kernel-Induced Cancellation and Dynamic Mirrors.}
The core power of our framework is demonstrated when we apply the nontrivial feasible branch shift $h = \{v_4, v_5, v_6\} \in \mathcal{K}_F$, represented algebraically by the generator combination $g_4 \oplus g_5$ (or equivalently by $g_{C_1}$), where adding the redundant generator pair $g_{C_2} \oplus g_6$ yields the same branch mask. Geometrically, this shift instructs us to reflect $v_4, v_5$, and $v_6$ simultaneously across the $y$-axis ($x=0$), yielding:
\[
v_4' = (-1, 1), \quad v_5' = (-2, 2), \quad v_6' = (-2, 0).
\]
Because all three vertices are reflected by the same isometric reflection $R_{C_1}$ across the $y$-axis, the internal active distance $\|v_4' - v_5'\|_2 = \sqrt{2} = d_{45}$ is preserved. 

The interaction with the dynamic mirror of $v_6'$ reveals the beauty of this cancellation. In the original embedding, $v_6$ is placed using the mirror $\operatorname{aff}(v_1, v_4)$. Under the branch shift, $v_4$ moves to $v_4'$, so the dynamic mirror for the subsequent step rotates to $\operatorname{aff}(v_1, v_4')$, which is the line $y = -x$. The coordinates $v_6' = (-2,0)$ are the exact reflection of the reference $v_6 = (2,0)$ across the $y$-axis. Because $v_6'$ is placed symmetrically with respect to the rotated mirror $y = -x$, the discretization distances are preserved. Crucially, the pruning distance to the fixed vertex $v_3 = (0,2)$ is preserved:
\[
\|v_3 - v_6'\|_2 = \|(0,2) - (-2,0)\|_2 = 2\sqrt{2} = d_{36}.
\]
This coordinate realization illustrates how the algebraic kernel $\ker V$ maps perfectly to viable physical realizations in this system, factoring out the representation redundancies and matching the rank-count prediction.

\section{Conclusion}
\label{sec:conclusion}

By reformulating the combinatorial constraints of the Discretizable Distance Geometry Problem (DDGP) in the language of linear algebra over $\mathbb{F}_2$, this work establishes a bridge between local geometric reflections and global counting complexity. The core insight is that the complexity of solution counting under pruning is fundamentally governed by the algebraic interaction of labeled violations. When physical violations are systematically indexed by their corresponding mirror cliques, the space of feasible branch shifts emerges naturally as the projection $\mathcal{K}_F = M(\ker V)$. Whenever a viable reference solution exists and mirror-separation holds, this linear structure determines the total solution count by assigning an independent factor of two to each branch decision that escapes the algebraic reach of the pruning constraints.

This perspective provides a unified explanation for the tractability of classical DMDGP and unpruned DDGP instances, where graph-induced symmetries naturally guarantee full branch viability. In the presence of arbitrary pruning edges, the labeled violation matrix $V$ acts as a precise algebraic filter, identifying exactly which combinations of reflections preserve distance constraints. Crucially, our resulting rank formula cleanly decouples the purely combinatorial structure of the discretization graph from the measure-zero geometric coincidences that characterize degenerate physical embeddings.

Our algebraic framework opens a promising avenue for the design of highly efficient distance geometry algorithms. By shifting the focus from combinatorial search over exponential branch trees to linear operations over $\mathbb{F}_2$, this formulation provides the mathematical foundation for solver paradigms that may reduce branch enumeration. In particular, the algebraic filters developed here can be directly integrated into solvers to detect infeasible branches at early stages, bypassing expensive coordinate computations.

Specifically, future work will focus on exploiting the structured sparsity of the mask matrix $M$ and the labeled violation matrix $V$ to construct and solve the associated $\mathbb{F}_2$ systems efficiently for large-scale discretization networks. Such advancements hold the potential to significantly accelerate distance geometry solvers, demonstrating the power of finite-field linear algebra to drastically reduce the computational complexity of spatial reconstruction problems.

\bibliographystyle{plain}
\bibliography{references}

\end{document}